\title[A note on the positivity of a quasi-local mass]{A note on the positivity of a quasi-local mass in general dimensions}
\author{Xian-Tao Huang}
\address{School of Mathematics and Computational science\\  Sun Yat-sen University\\ Guangzhou 510275\\ E-mail address: hxiant@mail2.sysu.edu.cn}
\newtheorem{thm}{Theorem}[section]
\newtheorem{prop}[thm]{Proposition}
\theoremstyle{definition}
\theoremstyle{remark}
\newtheorem{rem}[thm]{Remark}
\numberwithin{equation}{section}
\begin{document}

\maketitle
\begin{abstract}Wang and Yau \cite{wangyau07} introduced a quasi-local mass, which is a hyperbolic background generalization of Liu-Yau's expression \cite{liuyau03} \cite{liuyau06}, and proved its positivity. In this note, we prove that the positivity of this quasi-local mass is still valid under weaker assumptions on the boundary hypersurface in general dimensions. The method we used is similar to that used by Eichmair, Miao and Wang in \cite{emw12}.

\vspace*{5pt}
\noindent {\it 2010 Mathematics Subject Classification}: 53C20, 83C99.

\vspace*{5pt}
\noindent{\it Keywords}: Quasi-local mass, Inverse curvature flow.

\end{abstract}
\section{Introduction}

In the celebrated work \cite{ShiTam02}, Shi and Tam made use of an idea of
Bartnik \cite{Bar86} and the positive mass theorem to prove a beautiful result on the boundary behavior of compact Riemannian manifolds with non-negative scalar curvature. In \cite{liuyau03} \cite{liuyau06}, Liu and Yau introduced a quasi-local mass in space-time, whose Riemannian version corresponds
to the earlier theorem of Shi and Tam, and proved its positivity.
Later on, Wang and Yau \cite{wangyau07} generalized Liu-Yau's quasi-local mass in the case when
the Gaussian curvature of the surface is not necessarily positive, and obtained a time-like four-vector instead of a positive quantity. In \cite{wangyau07}, even though Wang and Yau dealt with 2-dimensional closed surfaces, they also claimed similar result holds in higher dimensions. Wang and Yau's theorem is as follows:

\begin{thm}\label{origin} {\rm(Wang, Yau \cite{wangyau07})}
Suppose $(\Omega, g)$ is an $n$-dimensional compact spin manifold $(n\geq3)$ with smooth boundary $\Sigma$, which is a topological sphere.
Suppose the following conditions hold:
\begin{description}
  \item[(1)] the scalar curvature $R$ of $\Omega$ satisfies $R\geq-n(n-1)k^{2}$ and the mean curvature $H$ of $\Sigma$ with respect to the outward pointed unit normal vector is positive,
  \item[(2)] the sectional curvature $K$ of $\Sigma$ is larger than $-k^{2}$, and $\Sigma$ can be isometrically embedded into $\mathbb{H}^{n}_{-k^{2}}$ in $\mathbb{R}^{n,1}$.
\end{description}
Then there exists a future-directed
time-like vector-valued function $\mathbf{W^{0}}: \Sigma\rightarrow\mathbb{R}^{n,1}$ such that
\[\int_{\Sigma}(H_{0}-H)\mathbf{W^{0}}d\sigma\]
is a future-directed non-space-like vector. Here $d\sigma$ is the volume form of the induced metric, $H_{0}$ is the mean curvature of
the isometric embedding into $\mathbb{H}^{n}_{-k^{2}}$.
\end{thm}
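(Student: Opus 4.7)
I would attack the theorem by Witten's spinor method, in the hyperbolic-background form used to prove positivity of mass for asymptotically hyperbolic manifolds (X.\ Wang; Chru\'sciel--Herzlich) and adapted by Wang--Yau in their original $4$-dimensional argument. Since $\Omega$ is spin, one has the intrinsic spinor bundle $\mathbb{S}$ and the modified Dirac operator
\[
\widehat D\phi:=D\phi+\tfrac{nk}{2}\,\mathbf{i}\,\phi,
\]
whose Lichnerowicz--Weitzenb\"ock identity reads
\[
\widehat D^{*}\widehat D=\nabla^{*}\nabla+\tfrac{1}{4}\bigl(R+n(n-1)k^{2}\bigr).
\]
Hypothesis (1) thus makes the zeroth-order term non-negative, and imaginary Killing spinors on $\mathbb{H}^{n}_{-k^{2}}$ lie in $\ker\widehat D$.

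Using the isometric embedding $\iota:\Sigma\hookrightarrow\mathbb{H}^{n}_{-k^{2}}$ furnished by hypothesis (2), I would pull back a basis $\{\phi_{0}\}$ of imaginary Killing spinors to obtain boundary spinors on $\Sigma$. For each such $\phi_{0}$, I would solve the boundary value problem
\[
\widehat D\phi=0 \text{ on }\Omega,\qquad P_{+}\phi=P_{+}\phi_{0} \text{ on }\Sigma,
\]
where $P_{+}$ is a chirality-type projector built from Clifford multiplication by the outward unit normal to $\Sigma$. Standard Lopatinsky--Shapiro ellipticity, combined with the coercivity coming from $R+n(n-1)k^{2}\geq 0$ and $H>0$, should yield existence and uniqueness.

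Integrating the Weitzenb\"ock identity against $\phi$ over $\Omega$ and exploiting the Killing spinor equation $\nabla_{X}\phi_{0}=\tfrac{k}{2}\mathbf{i}\,X\cdot\phi_{0}$ restricted to $\Sigma$, the boundary term reduces to a sesquilinear expression proportional to $(H_{0}-H)|\phi_{0}|^{2}$. Hence $\int_{\Sigma}(H_{0}-H)|\phi_{0}|^{2}\,d\sigma\geq 0$ for every imaginary Killing spinor $\phi_{0}$. The squaring map from the space of imaginary Killing spinors on $\mathbb{H}^{n}_{-k^{2}}$ (equivalently, parallel spinors on $\mathbb{R}^{n,1}$) to $\mathbb{R}^{n,1}$ is well known to sweep out the future null cone, and as $\phi_{0}$ varies over a basis it recovers the position vector $\iota:\Sigma\to\mathbb{R}^{n,1}$, which is $\mathbf{W}^{0}$. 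The collection of scalar inequalities then upgrades to the statement that $\int_{\Sigma}(H_{0}-H)\mathbf{W}^{0}\,d\sigma$ pairs non-negatively with every past-directed null vector in $\mathbb{R}^{n,1}$, which is precisely the future-directed non-spacelike property.

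The principal obstacle is the boundary computation: I would need to verify that the chirality boundary condition is elliptic for the shifted operator $\widehat D$ (the imaginary mass term perturbs the usual Hijazi--Montiel analysis) and then identify the resulting boundary integrand precisely with $(H_{0}-H)|\phi_{0}|^{2}$. A convenient way to organize the latter is to compare two integrations by parts, one on $\Omega$ with the solution $\phi$ and a reference one on the fill-in of $\Sigma$ inside $\mathbb{H}^{n}_{-k^{2}}$ with the Killing spinor $\phi_{0}$, so that only the difference $H_{0}-H$ survives on $\Sigma$. The hypothesis $K>-k^{2}$ on $\Sigma$ plays no role in this argument beyond ensuring existence (and uniqueness) of the isometric embedding into $\mathbb{H}^{n}_{-k^{2}}$, so that $\phi_{0}$ and $H_{0}$ are well defined in the first place; weakening it is presumably the main business of the note.
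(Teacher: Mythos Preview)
Your approach is genuinely different from the one described in the paper (which is Wang--Yau's original argument). The paper records Wang--Yau's proof as a three-step extension scheme: (i) use the convexity of $\iota(\Sigma)$ in $\mathbb{H}^{n}_{-k^{2}}$ to foliate the exterior by the unit normal flow and solve a Bartnik--Shi--Tam prescribed scalar curvature equation, thereby gluing $(\Omega,g)$ to an asymptotically hyperbolic end; (ii) prove a positive mass theorem for that complete manifold via Killing spinors and the Lichnerowicz formula at infinity; (iii) construct $\mathbf{W}^{0}$ by solving a \emph{backward} parabolic equation with prescribed data at infinity and run a monotonicity formula back to $\Sigma$. In particular their $\mathbf{W}^{0}$ is not the position vector $\mathbf{X}$, and the spinor argument is applied on the noncompact glued manifold, not on $\Omega$ with boundary conditions. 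Your proposal instead works directly on the compact $\Omega$ with a chirality-type boundary condition for $\widehat D$, in the spirit of Hijazi--Montiel(--Zhang); if it succeeds it yields the explicit choice $\mathbf{W}^{0}=\mathbf{X}$, which is cleaner than Wang--Yau's parabolically constructed weight.

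Two cautions. First, the ``principal obstacle'' you flag is real: the boundary term of the Weitzenb\"ock identity on $\Omega$ is $\int_{\Sigma}\langle\phi,(D^{\Sigma}-\tfrac{H}{2})\phi\rangle$ (suitably shifted), and to convert this into $\int_{\Sigma}(H_{0}-H)|\phi_{0}|^{2}$ you need more than ``comparing two integrations by parts''; you need the identity $D^{\Sigma}\phi_{0}=\tfrac{H_{0}}{2}\phi_{0}$ for the restricted Killing spinor together with a careful check that the chirality projector kills the cross terms between $\phi$ and $\phi-\phi_{0}$. This is doable but is the entire content of the proof. Second, your closing remark about the role of $K>-k^{2}$ is correct for \emph{your} argument but misdiagnoses Wang--Yau's: in their scheme the condition is equivalent (via the Gauss equation) to convexity of $\iota(\Sigma)$, and is used essentially to guarantee that the unit normal flow foliates the exterior so that step~(i) makes sense. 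The whole point of the present note is to replace that normal-flow foliation by an inverse curvature flow of Gerhardt, which only needs $(\kappa_{1},\dots,\kappa_{n-1})\in\Gamma_{2}$, i.e.\ $R>-(n-1)(n-2)k^{2}$, together with star-shapedness.
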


\vskip 0.2cm \noindent

\begin{rem}
There is a space-time version in Wang and Yau's paper \cite{wangyau07}. However, we are only interested in the Riemannian version in this paper. Recently, Wang and Yau \cite{wangyau09_1} \cite{wangyau09_2} introduced a new definition of quasi-local mass, and there are lots of investigation on the new definition.
\end{rem}

\vskip 0.2cm \noindent

The proof of Theorem \ref{origin} consists of three parts, we give a short description in the following. We denote the image of the isometric embedding of $\Sigma$ in $\mathbb{H}^{n}_{-k^{2}}$ by $\Sigma_{0}$.
The first part of the proof is to deform $\Sigma_{0}$ in the normal direction at unit speed in order to obtain a foliation of the unbounded region $\overline{\Omega}$ of $\mathbb{H}^{n}_{-k^{2}}\setminus\Sigma_{0}$, then we obtain a new metric on $\overline{\Omega}$ by solving a prescribed scalar curvature equation, then gluing it with $(\Omega,g)$ to obtain an asymptotically hyperbolic manifold $(\overline{M},\overline{g})$. The second part need the existence of Killing spinor fields on hyperbolic spaces, and make use of them to prove a positive mass type theorem for the asymptotically hyperbolic manifold $(\overline{M},\overline{g})$. The third component is to obtain $\mathbf{W^{0}}$ by solving a backward parabolic equation with a prescribed value at infinity and to obtain a monotonicity formula.
The first and third parts of the argument in higher dimensions are the same as in $n=3$ except some obvious modification.
For the positive mass type theorem in general dimensions, Kwong gave a detail proof of it in \cite{Kwong12}.

In this note, we will prove that some assumptions in Theorem \ref{origin} can be weaken while we can get the same conclusion. This paper is motivated by the paper of Eichmair, Miao and Wang \cite{emw12}, where the authors extended the theorem of Shi and Tam \cite{ShiTam02} to some weaker assumptions. The method we used is similar to that used in \cite{emw12}.

Our main theorem is as follows:

\begin{thm}\label{main}
The conclusion of Theorem \ref{origin} remains valid if the hypothesis that the sectional curvature $K$ of $\Sigma$ is larger than $-k^{2}$ and $\Sigma$ can be isometrically embedded into $\mathbb{H}^{n}_{-k^{2}}$, is weaken to the requirement that $\Sigma$ has scalar curvature $R>-(n-1)(n-2)k^{2}$ and can be isometrically embedded into $\mathbb{H}^{n}_{-k^{2}}$ star-shapedly.
\end{thm}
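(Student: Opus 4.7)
I would follow the three-part strategy of Wang and Yau's original proof and, in the spirit of Eichmair--Miao--Wang \cite{emw12}, only modify the construction of the foliation of the exterior of $\Sigma_{0}$ in $\mathbb{H}^{n}_{-k^{2}}$. The role of the sectional curvature assumption $K>-k^{2}$ in \cite{wangyau07} was to ensure that the unit-speed normal flow out of $\Sigma_{0}$ produces a smooth foliation with positive mean curvature; without this bound one has to foliate by different means, and here the star-shapedness of $\Sigma_{0}\subset\mathbb{H}^{n}_{-k^{2}}$ is what makes it possible.

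The first step is to replace the parallel foliation by the inverse mean curvature flow (IMCF). By the long-time existence theorem for IMCF of star-shaped mean-convex hypersurfaces in hyperbolic space (Gerhardt and its refinements), starting from $\Sigma_{0}$ one obtains a smooth foliation $\{\Sigma_{t}\}_{t\geq0}$ of the unbounded component of $\mathbb{H}^{n}_{-k^{2}}\setminus\Sigma_{0}$, with $\Sigma_{t}$ star-shaped for all $t$, converging exponentially to geodesic spheres at infinity, and with explicit asymptotic control on the mean curvature $H_{0}$ and on the induced metrics $g_{t}$. In the corresponding radial gauge the hyperbolic metric on the exterior takes the form $\tfrac{(n-1)^{2}}{H_{0}^{2}}dt^{2}+g_{t}$.

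Next I would define a new metric on the exterior by
\[
\bar{g}=\frac{\phi^{2}(n-1)^{2}}{H_{0}^{2}}\,dt^{2}+g_{t},
\]
where $\phi>0$ solves a first-order parabolic equation in $t$ obtained by requiring the scalar curvature of $\bar{g}$ to be identically $-n(n-1)k^{2}$, with initial condition $\phi|_{t=0}=H_{0}/H$ on $\Sigma$. As in \cite{emw12}, the intrinsic scalar curvature assumption $R^{\Sigma}>-(n-1)(n-2)k^{2}$ is exactly what is needed for this initial data, together with the parabolic equation, to produce a global, positive solution with an asymptotic expansion of the form $\phi=1+O(e^{-nt})$. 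Gluing $\bar{g}$ to $(\Omega,g)$ across $\Sigma$ then produces an asymptotically hyperbolic manifold $(\overline{M},\bar{g})$ with scalar curvature bounded below by $-n(n-1)k^{2}$ in the distributional sense. The higher-dimensional positive mass theorem for asymptotically hyperbolic spin manifolds (Kwong \cite{Kwong12}) then applies to $(\overline{M},\bar{g})$, and the third part of the Wang--Yau proof -- solving a backward parabolic equation with Killing-spinor boundary data at infinity to produce $\mathbf{W^{0}}$ and deriving the monotonicity formula along the foliation -- goes through without essential change.

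The main obstacle is the analytic content of the second step: proving long-time existence, strict positivity, and the sharp $O(e^{-nt})$ decay of $\phi$ along a foliation whose intrinsic geometry is controlled only through the IMCF asymptotics rather than through pointwise sectional curvature bounds. In particular, one must carefully estimate the $H_{0}^{-1}$ and hyperbolic warping factors entering the parabolic equation in order to certify the correct asymptotic expansion of $\bar{g}$ at infinity, to verify the hypotheses of the positive mass theorem, and to identify the mass vector of $(\overline{M},\bar{g})$ with a constant multiple of $\int_{\Sigma}(H_{0}-H)\mathbf{W^{0}}d\sigma$. These estimates are the direct hyperbolic analogs of the Euclidean ones carried out in \cite{emw12}.
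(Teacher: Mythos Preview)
Your proposal follows the right three-part template, but the paper's execution differs from yours in two respects, and the second one dissolves what you identify as ``the main obstacle.''

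First, the flow: the paper does not use inverse mean curvature flow but rather the inverse $\sigma_{2}/\sigma_{1}$ flow
\[
\frac{\partial F}{\partial t}=\frac{n-2}{n-1}\,\frac{H}{H^{2}-|A|^{2}}\,\nu .
\]
Gerhardt's theorem \cite{Ger11} applies to this particular flow and guarantees that star-shapedness and the $\Gamma_{2}$ condition $(\kappa_{1},\dots,\kappa_{n-1})\in\Gamma_{2}$ (equivalently $R^{t}>-(n-1)(n-2)k^{2}$) are preserved, and that the leaves become strictly convex in finite time. Preservation of $\Gamma_{2}$ is not an incidental refinement: it is exactly what makes the monotonicity formula work (the sign comes from $|A_{\eta}|^{2}-H_{\eta}^{2}<0$) and what keeps the coefficient in the prescribed-scalar-curvature PDE under control. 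Your IMCF proposal only invokes mean-convexity; you would still need to show that $H^{2}-|A|^{2}>0$ persists along IMCF in $\mathbb{H}^{n}_{-k^{2}}$, which is not part of the standard IMCF package.

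Second, and more importantly, the paper does \emph{not} run the flow to infinity. It runs the expanding flow only on a finite interval $[0,T]$, chosen so that $F(\Sigma,T)$ is strictly convex; on that convex hypersurface the original Wang--Yau hypotheses are met, and from there outward the paper reverts to Wang--Yau's unit-speed normal foliation and their equation for $v$. The manifold $(\overline{M},\bar g)$ is thus assembled from \emph{three} pieces --- $(\Omega,g)$, the flow region $(\widehat{\Omega},g_{u})$, and the Wang--Yau exterior $(\widetilde{\Omega},g'')$ --- with two Lipschitz corners rather than one. The cost is a positive mass theorem with two corners (handled by the same Shi--Tam/Liu--Yau regularity argument) and a two-stage backward parabolic equation for $\mathbf{W}$; the payoff is that no asymptotic analysis of $\phi$ along a nonstandard foliation is needed at all, since the behaviour at infinity is literally that of \cite{wangyau07} and \cite{Kwong12}. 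Your one-stage approach is conceptually tidier (one foliation, one corner), but it trades that tidiness for the delicate decay estimates you yourself flag, together with the identification of the mass along an IMCF foliation in hyperbolic space, where the rescaled leaves need not become round. The paper's finite-time switch is precisely the device that sidesteps all of this.
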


\vskip 0.2cm \noindent

\begin{rem}
Notice that in $\mathbb{H}^{n}_{-k^{2}}$ we have the Gauss equation
$$K_{ij}=-k^{2}+h_{ii}h_{jj}-h_{ij}^{2},$$
where $A_{0}=(h_{ij})$ denote the second fundamental form of the embedding.
Hence the condition that the sectional curvature of $\Sigma$ is larger than $-k^{2}$, is equivalent to the second fundamental form $A_{0}$ of the embedding is convex.
Also by the Gauss equation, $R>-(n-1)(n-2)k^{2}$ is equivalent to $H_{0}^{2}-|A_{0}|^{2}>0$.
It is well known that this implies the embedding is mean positive (See, e.g. \cite{bc97}).
We denote the cone $\Gamma_{2}\subset{\mathbb{R}}^{n-1}$ to be
\[\Gamma_{2}=\{(\lambda_{1},\ldots,\lambda_{n-1})|\sum_{i=1}^{n-1}\lambda_{i}>0,\sum_{1\leq{i}<j\leq{n-1}}\lambda_{i}\lambda_{j}>0\}.\]
Then, in Theorem \ref{main}, our new assumption is equivalent to that $\Sigma$ can be isometrically embedded into $\mathbb{H}^{n}_{-k^{2}}$ star-shapedly and the principal curvatures satisfy $(\kappa_{1},\ldots,\kappa_{n-1})\in\Gamma_{2}$ everywhere on $\Sigma$.
\end{rem}

\vskip 0.2cm \noindent

\section{Proof of theorem \ref{main}}

Suppose $\iota: \Sigma\rightarrow\mathbb{H}^{n}_{-k^{2}}$ is the star-shaped embedding with $(\kappa_{1},\ldots,\kappa_{n-1})\in\Gamma_{2}$. Following \cite{emw12}, we call
a smooth flow $F: \Sigma\times[0,T]\rightarrow\mathbb{H}^{n}_{-k^{2}}$ is expanding,
if $F$ satisfies the following:
\begin{description}
  \item[1)] $\frac{\partial}{\partial t}F(x,t)=\eta\nu$ with $F(\Sigma,0)$ is $\iota(\Sigma)$, where $\nu$ is the outward pointed unit normal vector and $\eta$ is a positive function on $\Sigma\times[0,T]$,
  \item[2)] on every slice $F(\Sigma,t)$, the principal curvatures satisfy $(\kappa_{1}(t),\ldots,\kappa_{n-1}(t))\in\Gamma_{2}$,
  \item[3)] $F(\Sigma,T)$ is a strictly convex hypersurface in $\mathbb{H}^{n}_{-k^{2}}$.
\end{description}
One example of expanding flow is of the form
\begin{align}\label{inverseflow}
  \frac{\partial}{\partial t}F=\frac{n-2}{n-1}\frac{H}{H^{2}-|A|^{2}}\nu,
\end{align}
whose property is investigated by Gerhardt \cite{Ger11}:

\begin{thm}\label{Ger} {\rm(Gerhardt \cite{Ger11})}
If the initial hypersurface $\Sigma_{0}=\iota(\Sigma)$ is star-shaped and the principal curvatures satisfy $(\kappa_{1}(0),\ldots,\kappa_{n-1}(0))\in\Gamma_{2}$, then the solution for the flow (\ref{inverseflow})
exists for all time $t>0$.
The hypersurfaces $F(\Sigma,t)$ converge to infinity, and remain star-shaped with $(\kappa_{1}(t),\ldots,\kappa_{n-1}(t))\in\Gamma_{2}$. Moreover, the hypersurfaces become strictly convex exponentially fast and more and more totally umbilical in the sense of
\[|h_{ij}-\delta_{ij}|\leq Ce^{-\frac{t}{n-1}},   t>0.\]
\end{thm}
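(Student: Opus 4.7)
The plan is to reduce (\ref{inverseflow}) to a scalar quasilinear parabolic PDE for the radial graph function and then run the standard a priori estimate / continuation method. First, fix the star-center of $\iota(\Sigma)$ as the origin of geodesic polar coordinates in $\mathbb{H}^{n}_{-k^{2}}$, so the ambient metric takes the warped-product form $dr^{2}+\varphi(r)^{2}\,g_{\mathbb{S}^{n-1}}$ with $\varphi(r)=k^{-1}\sinh(kr)$. Since each slice $F(\Sigma,t)$ is star-shaped, it is the graph of a function $\rho(\cdot,t)$ on $\mathbb{S}^{n-1}$, and (\ref{inverseflow}) becomes a scalar parabolic equation $\partial_{t}\rho=\mathcal{F}(\rho,D\rho,D^{2}\rho)$. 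The speed $\tfrac{n-2}{n-1}\tfrac{H}{H^{2}-|A|^{2}}$ is smooth, strictly positive, and homogeneous of degree $-1$ on the open cone $\Gamma_{2}$, because there $H^{2}-|A|^{2}=2\sigma_{2}>0$.

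I would establish a priori estimates in three tiers. For $C^{0}$, geodesic spheres are exact solutions of (\ref{inverseflow}) whose radius tends to infinity, and the scalar maximum principle traps $\rho$ between two such spherical barriers. For $C^{1}$, I would control the angle-type quantity $v=\sqrt{1+\varphi(\rho)^{-2}|D\rho|^{2}}$, which records the inner product between the outward unit normal and the radial direction $\partial_{r}$; a maximum-principle argument on its evolution equation preserves uniform star-shapedness. The $C^{2}$ estimate is the main step: differentiating (\ref{inverseflow}) gives an evolution equation for the Weingarten operator, to which one applies the tensor maximum principle together with an auxiliary test function involving $\log v$; the concavity of $\sigma_{1}/\sigma_{2}$ on $\Gamma_{2}$ then yields an upper bound on the largest principal curvature and simultaneously prevents the principal curvatures from approaching $\partial\Gamma_{2}$, keeping the equation uniformly parabolic. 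Standard Krylov–Safonov and Schauder theory then yield higher regularity, and the continuation method gives long-time existence.

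For the asymptotic statement I would rescale around the umbilical limit. The evolution equation for the mixed second fundamental form $h_{i}^{\,j}$ in hyperbolic background, after subtraction of its limiting value, has at leading order a dissipative linear term with coefficient $-\tfrac{1}{n-1}$, plus nonlinear corrections that decay because the slices recede to infinity, where the induced geometry approaches that of a round sphere. A maximum-principle estimate on $\max_{i,j}|h_{i}^{\,j}-\delta_{i}^{\,j}|$ then produces the decay $|h_{ij}-\delta_{ij}|\leq C e^{-t/(n-1)}$, which in particular forces every slice to be strictly convex for $t$ sufficiently large.

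The main obstacle is the $C^{2}$ estimate together with preservation of $\Gamma_{2}$. This is precisely where the algebraic structure of $H/(H^{2}-|A|^{2})$ matters: one must exploit the concavity of $\sigma_{1}/\sigma_{2}$ on $\Gamma_{2}$ (which fails on the larger cone $\Gamma_{1}$) so that the evolution of the largest principal curvature acquires a favorable sign from its second-derivative term. Everything else is either standard parabolic theory or a routine maximum-principle computation in hyperbolic background.
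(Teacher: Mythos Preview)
The paper does not prove this theorem at all: it is quoted verbatim from Gerhardt \cite{Ger11} and used as a black box to guarantee that the flow (\ref{inverseflow}) is expanding and eventually strictly convex. So there is nothing in the paper's own argument to compare your proposal against; your sketch goes well beyond what the present paper undertakes.

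That said, your outline is essentially the standard route Gerhardt follows for inverse curvature flows in hyperbolic space: reduction to a scalar equation for the radial graph, $C^{0}$ barriers by geodesic spheres, a $C^{1}$ bound via the support-function quantity, a $C^{2}$ bound via the evolution of the Weingarten map together with Krylov--Safonov/Schauder theory, and a linearization argument for the exponential convergence to umbilicity. One technical slip: the relevant concavity is that of $\sigma_{2}/\sigma_{1}$ on $\Gamma_{2}$ (equivalently, the reciprocal of the speed is, up to a constant, $2\sigma_{2}/\sigma_{1}$, and it is this curvature function that is concave and homogeneous of degree one), not of $\sigma_{1}/\sigma_{2}$ as you wrote. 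This is exactly the structural hypothesis Gerhardt imposes on his curvature functions, and it is what drives both the $C^{2}$ estimate and the preservation of the cone $\Gamma_{2}$. With that correction, your plan matches the architecture of \cite{Ger11}; but for the purposes of the present paper you may simply cite the result.
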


\vskip 0.2cm \noindent

Suppose we have an expanding flow $F: \Sigma\times[0,T]\rightarrow\mathbb{H}^{n}_{-k^{2}}, \frac{\partial}{\partial t}F(x,t)=\eta\nu$.
The pullback of the hyperbolic metric by $F$ to $\Sigma\times[0,T]$ has the form
$g_{\eta}=\eta^{2}dt^{2}+g_{t}$, where $g_{t}$ is the induced metric on the slice $F(\Sigma,t)$. Denote the intrinsic scalar curvature of $g_{t}$ by $R^{t}$,
then $R^{t}>-(n-1)(n-2)k^{2}$. For a positive function $u$, denote $A_{u}$ and $H_{u}$ the second fundamental form and the mean curvature of the slice $\Sigma\times\{t\}$
in the metric $g_{u}=u^{2}dt^{2}+g_{t}$ respectively. It's easy to prove $uA_{u}=\eta A_{\eta}, uH_{u}=\eta H_{\eta}$.
If we denote $\mathbf{1}$ the function that is identically one, then $H_{\mathbf{1}}=\eta H_{\eta}>0$, and $A_{\mathbf{1}}=\eta A_{\eta}$ is bounded.

Suppose we have a positive function $u$ on $\Sigma\times[0,T]$ such that the metric
$g_{u}$ has scalar curvature $R(g_{u})\equiv -n(n-1)k^{2}$.
 By the Gauss equation and the variational formula of mean curvature,
\begin{align}\label{equa1}
\frac{\partial}{\partial t}H_{u}&=-\Delta u-u[|A_{u}|^{2}+Ric(g_{u})(\nu_{u},\nu_{u})]\nonumber\\
                                &=-\Delta u-\frac{u}{2}[R(g_{u})-R^{t}+H_{u}^{2}+|A_{u}|^{2}]\nonumber\\
                                &=-\Delta u+\frac{u}{2}R^{t}+\frac{u}{2}\cdot n(n-1)k^{2}-\frac{1}{2u}[H_{\mathbf{\mathbf{1}}}^{2}+|A_{\mathbf{1}}|^{2}],
\end{align}
where $\Delta$ and $\mid\cdot\mid$ are taken with respect to $g_{t}$. On the other hand,
\[\frac{\partial}{\partial t}H_{u}=\frac{\partial}{\partial t}(\frac{H_{\mathbf{1}}}{u}),\]
then we have
\begin{align}\label{equa}
\frac{\partial}{\partial t}u=\frac{u^{2}}{H_{\mathbf{1}}}\Delta u+\frac{u}{2H_{\mathbf{1}}}[H_{\mathbf{1}}^{2}+|A_{\mathbf{1}}|^{2}]-\frac{u^{3}}{2H_{\mathbf{1}}}[R^{t}+n(n-1)k^{2}]+\frac{u}{H_{\mathbf{1}}}\frac{\partial}{\partial t}H_{\mathbf{1}}.
\end{align}
\begin{prop}\label{solution}
Given any positive function $u_{0}$ on $\Sigma$, there is a positive solution $u$ on $\Sigma\times[0,T]$ of equation (\ref{equa}) such that $u\mid_{t=0}=u_{0}$.
\end{prop}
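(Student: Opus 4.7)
The plan is to view (\ref{equa}) as a quasilinear parabolic equation for $u$ on the compact cylinder $\Sigma\times[0,T]$ and establish existence by combining short-time existence, a priori $C^{0}$ bounds, and standard parabolic regularity. Note first that $H_{\mathbf{1}}=\eta H_{\eta}$ is smooth and strictly positive on the compact set $\Sigma\times[0,T]$ and is therefore bounded below by a positive constant; hence the coefficient $u^{2}/H_{\mathbf{1}}$ of $\Delta u$ is strictly positive whenever $u>0$, so the equation is strictly parabolic on bounded positive ranges of $u$. A standard linearization plus contraction mapping argument for quasilinear parabolic equations (as in Ladyzhenskaya--Solonnikov--Ural'tseva) then produces a smooth positive solution on a maximal interval $[0,T^{*})$ with $u|_{t=0}=u_{0}$.

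To force $T^{*}\geq T$ I would derive uniform two-sided $C^{0}$ bounds via the maximum principle, the crucial algebraic input being that $R^{t}+n(n-1)k^{2}$ has a positive lower bound on $\Sigma\times[0,T]$. Indeed, the Gauss equation in $\mathbb{H}^{n}_{-k^{2}}$ yields
\begin{equation*}
R^{t}=-(n-1)(n-2)k^{2}+2\sum_{i<j}\kappa_{i}(t)\kappa_{j}(t),
\end{equation*}
and the expanding flow condition $(\kappa_{1}(t),\ldots,\kappa_{n-1}(t))\in\Gamma_{2}$ forces $\sum_{i<j}\kappa_{i}(t)\kappa_{j}(t)>0$, so $R^{t}+n(n-1)k^{2}\geq 2(n-1)k^{2}>0$ uniformly in $(x,t)$. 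Hamilton's maximum principle applied to $u_{\max}(t)=\max_{x}u(x,t)$ then gives, at a spatial maximizer where $\Delta u\leq 0$, a differential inequality of the form $u_{\max}'\leq C_{1}u_{\max}-C_{2}u_{\max}^{3}$ with $C_{2}>0$; ODE comparison yields a uniform upper bound. Applied to $u_{\min}(t)=\min_{x}u(x,t)$ where $\Delta u\geq 0$, and using the just-obtained upper bound on $u$, one obtains $u_{\min}'\geq -C u_{\min}$, so $u_{\min}(t)\geq (\min u_{0})\, e^{-Ct}>0$ and $u$ stays strictly positive.

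With uniform two-sided $C^{0}$ bounds, (\ref{equa}) is uniformly parabolic on $[0,T^{*})$ with smooth coefficients, and standard gradient and Schauder estimates (applied to the linear equation for $u$ obtained by freezing the nonlinearity) produce uniform $C^{2,\alpha}$ bounds. These allow the solution to be continued past $T^{*}$, so $T^{*}\geq T$ and $u$ exists on all of $\Sigma\times[0,T]$. The main obstacle I expect is the a priori upper bound on $u$: it hinges entirely on the positive lower bound for $R^{t}+n(n-1)k^{2}$, which turns the $-u^{3}$ term in (\ref{equa}) into a genuine restoring force. This is precisely where the weakened hypothesis $(\kappa_{1},\ldots,\kappa_{n-1})\in\Gamma_{2}$ and its preservation along the expanding flow (Theorem \ref{Ger} for the flow (\ref{inverseflow})) enter the argument; everything else is routine quasilinear parabolic PDE theory.
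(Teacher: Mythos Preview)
Your proposal is correct and follows essentially the same route as the paper: short-time existence, then two-sided $C^{0}$ bounds via the maximum principle (the upper bound coming from the positive sign of $R^{t}+n(n-1)k^{2}$, which makes the $-u^{3}$ term a restoring force; the lower bound from an exponentially decaying barrier), then standard parabolic regularity to continue. The only cosmetic difference is that the paper carries this out with explicit barrier functions---a constant $C$ for the upper bound and $\underline{u}=\beta e^{-\gamma t}$ for the lower---together with a first-touching-time contradiction, whereas you phrase the same content as ODE comparison for $u_{\max}(t)$ and $u_{\min}(t)$.
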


\vskip 0.2cm \noindent

\begin{proof}
The short time existence of the solution is guaranteed by the positivity of $u_{0}$ and $H_{\mathbf{1}}$. If we can prove that $u$ remains bounded from above and from below by some positive constants, then by standard parabolic theory we obtain the solution on $[0,T]$.
For the upper bound, let $C$ be a constant such that
\[C>\underset{\Sigma\times\{0\}}{\max}u_{0}, \text{\qquad and \qquad} C^{2}>\underset{\Sigma\times [0,T]}{\max}\frac{H_{\mathbf{1}}^{2}+|A_{\mathbf{1}}|^{2}+2\frac{\partial}{\partial t}H_{\mathbf{1}}}{R^{t}+n(n-1)k^{2}}.\]
If $u\geq C$ somewhere on $\Sigma\times[0,T]$, since $u(\Sigma,0)<C$, then there exists $(\tilde{x},\tilde{t})\in\Sigma\times(0,T]$ such that $u(\tilde{x},\tilde{t})=C$ and $u(x,t)\leq C$
for all $t\leq\tilde{t}$. Then since $H_{\mathbf{1}}>0, R^{\tilde{t}}>-(n-1)(n-2)k^{2}$, we have
\[\frac{\partial}{\partial t}u-\frac{u^{2}}{H_{\mathbf{1}}}\Delta u\geq0, \text{\qquad and \qquad}\frac{u}{2H_{\mathbf{1}}}[H_{\mathbf{1}}^{2}+|A_{\mathbf{1}}|^{2}]-\frac{u^{3}}{2H_{\mathbf{1}}}[R^{\tilde{t}}+n(n-1)k^{2}]+\frac{u}{H_{\mathbf{1}}}\frac{\partial}{\partial t}H_{\mathbf{1}}<0, \]
at $(\tilde{x},\tilde{t})$, contradicting equation (\ref{equa}). Thus $u<C$ on $\Sigma\times[0,T]$.

To derive a positive lower bound for $u$, define $\underline{u}=\beta e^{-\gamma t}$, where $\beta<\underset{\Sigma\times\{0\}}{\min}u_{0}$ is a positive constant,
and $\gamma$ is a positive constant such that
\[\gamma>\underset{\Sigma\times [0,T]}{\max}\left\{\beta^{2}[\frac{R^{t}+n(n-1)k^{2}}{2H_{\mathbf{1}}}]-\frac{H_{\mathbf{1}}^{2}+|A_{\mathbf{1}}|^{2}+2\frac{\partial}{\partial t}H_{\mathbf{1}}}{2H_{\mathbf{1}}}\right\}.\]

Let $v=u-\underline{u}$, then $v(\Sigma,0)>0$. If $v\leq0$ somewhere on $\Sigma\times[0,T]$, then there exists $(\tilde{x},\tilde{t})\in\Sigma\times(0,T]$ such that $v(\tilde{x},\tilde{t})=0$ and $v(x,t)\geq 0$
for all $t\leq\tilde{t}$. Then at $(\tilde{x},\tilde{t})$,
\[\frac{\partial}{\partial t}v-\frac{u^{2}}{H_{\mathbf{1}}}\Delta v\leq0,\]
However, by equation (\ref{equa}),
\begin{align}
\frac{\partial}{\partial t}v-\frac{u^{2}}{H_{\mathbf{1}}}\Delta v&=\frac{u}{2H_{\mathbf{1}}}[H_{\mathbf{1}}^{2}+|A_{\mathbf{1}}|^{2}+2\frac{\partial}{\partial t}H_{\mathbf{1}}]-\frac{u^{3}}{2H_{\mathbf{1}}}[R^{\tilde{t}}+n(n-1)k^{2}]+\beta\gamma e^{-\gamma \tilde{t}}\nonumber\\
&=\frac{u}{2H_{\mathbf{1}}}\left\{H_{\mathbf{1}}^{2}+|A_{\mathbf{1}}|^{2}+2\frac{\partial}{\partial t}H_{\mathbf{1}}-\beta^{2}e^{-2\gamma\tilde{t}}[R^{\tilde{t}}+n(n-1)k^{2}]+2H_{\mathbf{1}}\gamma\right\}>0.\nonumber
\end{align}
We get a contradiction. Therefore, $v>0$ on $\Sigma\times[0,T]$, i.e. $u$ has a positive lower bound.
\end{proof}

In order to prove the main theorem, we need the following monotonicity formula:

\begin{prop}\label{monoto}

Suppose $u$ and $\eta$ are two smooth positive functions on $\Sigma\times[0,T]$ such that
$R(g_{u})=-n(n-1)k^{2}$ and $Ric(g_{\eta})=-(n-1)k^{2}$. And $\mathbf{W}: \Sigma\times[0,T]\rightarrow\mathbb{R}^{n,1}$ satisfies the equation
\begin{align}\label{equaW}
  \left\{
     \begin{array}{ll}
       -\frac{H_{\mathbf{1}}}{u\eta}\frac{\partial \mathbf{W}}{\partial t}=\Delta \mathbf{W}-(n-1)k^{2}\mathbf{W},&t\in[0,T]\\
        \mathbf{W}(x,T)=\mathbf{W_{T}}(x)
     \end{array}
   \right.
\end{align}
Here $\mathbf{W_{T}}: \Sigma\rightarrow\mathbb{R}^{n,1}$ is a past-directed non-space-like vector-valued function.
Then for every future-directed null vector $\mathbf{\zeta}\in\mathbb{R}^{n,1}$,
\[\int_{\Sigma\times\{t\}}(H_{\eta}-H_{u})\mathbf{W}\cdot\mathbf{\zeta}d\sigma_{t}\]
is non-increasing in $t$. Here $d\sigma_{t}$ denotes the volume form of $g_{t}$ on $\Sigma\times\{t\}$, and $\cdot$ denotes the Lorentz inner product in $\mathbb{R}^{n,1}$.
\end{prop}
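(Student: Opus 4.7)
The plan is to differentiate $\int_{\Sigma\times\{t\}}(H_\eta - H_u)\mathbf{W}\cdot\mathbf{\zeta}\, d\sigma_t$ in $t$ and, after an integration by parts, use the hyperbolic Gauss equation to collapse the result into a manifestly non-positive perfect square. There are three sources of $t$-dependence in the integrand: the mean curvature difference $H_\eta - H_u$, the vector-valued function $\mathbf{W}$, and the measure $d\sigma_t$. The last one I would handle first: since the slices move with normal speed $\eta$ in the hyperbolic metric and $\eta H_\eta = H_\mathbf{1}$, one has $\partial_t d\sigma_t = H_\mathbf{1}\, d\sigma_t$.

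For the evolution of $H_\eta - H_u$, I would apply equation~(\ref{equa1}) to both $u$ and $\eta$; the derivation of (\ref{equa1}) applies equally to $\eta$ because the hyperbolic metric $g_\eta$ also has scalar curvature $-n(n-1)k^2$. Subtracting and using $1/\eta - 1/u = -(\eta - u)/(u\eta)$ yields
\[
\partial_t(H_\eta - H_u) = -\Delta(\eta - u) + \tfrac{1}{2}[R^t + n(n-1)k^2](\eta - u) + \tfrac{H_\mathbf{1}^2 + |A_\mathbf{1}|^2}{2u\eta}(\eta - u).
\]
Next, for $f := \mathbf{W}\cdot\mathbf{\zeta}$, which satisfies the same scalar backward parabolic equation as $\mathbf{W}$, equation~(\ref{equaW}) combined with $H_\eta - H_u = H_\mathbf{1}(u-\eta)/(u\eta)$ reduces $(H_\eta - H_u)\partial_t f$ to the clean expression $(\eta - u)[\Delta f - (n-1)k^2 f]$. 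Assembling the three contributions and integrating the Laplacian term of $\partial_t(H_\eta - H_u)$ by parts against $f$, the two Laplacian pieces cancel exactly---this cancellation is precisely why equation~(\ref{equaW}) has the coefficient $H_\mathbf{1}/(u\eta)$.

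What remains is $\tfrac{1}{2}\int_\Sigma(\eta - u)\{R^t + (n-1)(n-2)k^2 - (H_\mathbf{1}^2 - |A_\mathbf{1}|^2)/(u\eta)\}f\, d\sigma_t$. The key algebraic observation is that the Gauss equation applied to the slices of the expanding flow in $\mathbb{H}^n_{-k^2}$ gives $R^t + (n-1)(n-2)k^2 = H_\eta^2 - |A_\eta|^2 = \eta^{-2}(H_\mathbf{1}^2 - |A_\mathbf{1}|^2)$, so the inner bracket factors as $(H_\mathbf{1}^2 - |A_\mathbf{1}|^2)(u - \eta)/(u\eta^2)$, producing a second factor of $(\eta - u)$ with an overall minus sign:
\[
\frac{d}{dt}\int_{\Sigma\times\{t\}}(H_\eta - H_u)\mathbf{W}\cdot\mathbf{\zeta}\, d\sigma_t = -\tfrac{1}{2}\int_\Sigma \frac{(\eta - u)^2(H_\mathbf{1}^2 - |A_\mathbf{1}|^2)}{u\eta^2}\,\mathbf{W}\cdot\mathbf{\zeta}\, d\sigma_t.
\]
The expanding-flow condition $(\kappa_1,\ldots,\kappa_{n-1})\in\Gamma_2$ on each slice guarantees $H_\eta^2 > |A_\eta|^2$ and hence $H_\mathbf{1}^2 - |A_\mathbf{1}|^2 > 0$, so the integrand has the correct sign provided $\mathbf{W}\cdot\mathbf{\zeta}\geq 0$.

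To verify this last sign throughout $\Sigma\times[0,T]$, I observe that $\mathbf{W}\cdot\mathbf{\zeta}$ satisfies the same scalar backward parabolic equation~(\ref{equaW}), with terminal value $\mathbf{W}_T\cdot\mathbf{\zeta}\geq 0$ (a past-directed non-spacelike vector has non-negative Lorentz inner product with a future-directed null vector). Reversing time converts this into a standard forward parabolic equation on the closed manifold $\Sigma$, and the maximum principle propagates non-negativity to all $t\in[0,T]$. The main obstacle in the argument is the algebraic recognition of the hyperbolic Gauss identity $R^t + (n-1)(n-2)k^2 = H_\eta^2 - |A_\eta|^2$; without it the time derivative of the integral has no obvious sign, whereas with it the $\Gamma_2$ hypothesis enters the computation in precisely the way needed for monotonicity.
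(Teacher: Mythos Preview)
Your proposal is correct and follows essentially the same route as the paper: differentiate the integral, use the evolution of $H_\eta$ and $H_u$ together with the equation for $\mathbf{W}$, integrate by parts so that the Laplacian terms cancel, and then invoke the hyperbolic Gauss identity $R^t+(n-1)(n-2)k^2=H_\eta^2-|A_\eta|^2$ to arrive at the non-positive perfect-square integrand $(|A_\eta|^2-H_\eta^2)\frac{(u-\eta)^2}{2u}W$, with $W\ge 0$ by the maximum principle. The only cosmetic difference is that you obtain $\partial_t H_\eta$ by applying~(\ref{equa1}) with $R(g_\eta)=-n(n-1)k^2$, whereas the paper uses the first-variation formula directly with $Ric(g_\eta)=-(n-1)k^2$; these are equivalent and lead to the same final expression.
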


\vskip 0.2cm \noindent

\begin{proof}
First we note that since $\mathbf{W_{T}}$ is past-directed non-space-like, by the standard maximum principle of (backward) parabolic equation, the solution of equation (\ref{equaW}) remains past-directed non-space-like. For simplicity, we denote $W=\mathbf{W}\cdot\mathbf{\zeta}$, then $W\geq0$.

Since $Ric(g_{\eta})=-(n-1)k^{2}$, by the Gauss equation, we have
\[|A_{\eta}|^{2}-H_{\eta}^{2}=-(n-1)(n-2)k^{2}-R^{t}<0.\]
By equation (\ref{equa1}) and (\ref{equaW}), and the variational formula of volume form and mean curvature, we have
\begin{align}
 &\frac{d}{dt}\left(\int_{\Sigma\times\{t\}}(H_{\eta}-H_{u})Wd\sigma_{t}\right)\nonumber\\
=&\int_{\Sigma\times\{t\}}[-\Delta \eta-\eta(|A_{\eta}|^{2}+Ric(g_{\eta}))+\Delta u+\frac{u}{2}(-R^{t}-n(n-1)k^{2})+\frac{1}{2u}(H_{\mathbf{1}}^{2}+|A_{\mathbf{1}}|^{2})]Wd\sigma_{t}\nonumber\\
 &+\int_{\Sigma\times\{t\}}[(H_{\eta}-H_{u})\frac{\partial}{\partial t}W+(\eta^{-1}-u^{-1})H_{\mathbf{1}}^{2}W]d\sigma_{t}\nonumber\\
=&\int_{\Sigma\times\{t\}}(\Delta u-\Delta\eta)Wd\sigma_{t}+\int_{\Sigma\times\{t\}}[H_{\mathbf{1}}(\eta^{-1}-u^{-1})\frac{\partial}{\partial t}W+(\eta^{-1}-u^{-1})H_{\mathbf{1}}^{2}W]d\sigma_{t}\nonumber\\
&+\int_{\Sigma\times\{t\}}[(n-1)k^{2}\eta-\frac{|A_{\mathbf{1}}|^{2}}{\eta}+\frac{1}{2u}(H_{\mathbf{1}}^{2}+|A_{\mathbf{1}}|^{2})+\frac{u}{2}(\eta^{-2}(|A_{\mathbf{1}}|^{2}-H_{\mathbf{1}}^{2})-2(n-1)k^{2})]Wd\sigma_{t}\nonumber\\
=&\int_{\Sigma\times\{t\}}(u-\eta)[\frac{H_{\mathbf{1}}}{u\eta}\frac{\partial}{\partial t}W+\Delta W-(n-1)k^{2}W]d\sigma_{t}+\int_{\Sigma\times\{t\}}(|A_{\eta}|^{2}-H_{\eta}^{2})\frac{(u-\eta)^{2}}{2u}Wd\sigma_{t}\leq0.\nonumber
\end{align}
\end{proof}

Now we begin to prove Theorem \ref{main}.

\begin{proof}[Proof of Theorem \ref{main}.] We consider the flow (\ref{inverseflow}), and denote $\eta=\frac{n-2}{n-1}\frac{H}{H^{2}-|A|^{2}}$ for simplicity. By Theorem \ref{Ger} we can find a $T$ large enough such that $F: \Sigma\times[0,T]\rightarrow\mathbb{H}^{n}_{-k^{2}}, \frac{\partial}{\partial t}F(x,t)=\eta\nu$ is expanding. We will denote the domain between $F(\Sigma,0)$ and $F(\Sigma,T)$ in $\mathbb{H}^{n}_{-k^{2}}$ by $\widehat{\Omega}$, and denote the unbounded domain of $\mathbb{H}^{n}_{-k^{2}}\setminus F(\Sigma,T)$ by $\widetilde{\Omega}$. We denote the hyperbolic metric on $\mathbb{H}^{n}_{-k^{2}}$ by $g_{hyp}$.

Notice that on $\widehat{\Omega}$, $g_{hyp}$ has the form $g_{\eta}=\eta^{2}dt^{2}+g_{t}$, where $g_{t}$ are the induced metrics on $F(\Sigma,t)$. Let $H$ be the mean curvature of $\Sigma$ in $(\Omega,g)$, $H_{\eta}(x,t)$ be the mean curvature of the embedding $F(\cdot,t): \Sigma\times\{t\}\rightarrow\mathbb{H}^{n}_{-k^{2}}$. Taking $u_{0}(x)=\eta(x,0)\frac{H_{\eta}(x,0)}{H(x)}$, and solving the equation (\ref{equa}), we obtain a metric $g_{u}=u^{2}dt^{2}+g_{t}$ such that $R(g_{u})\equiv -n(n-1)k^{2}$ and $H_{u}(x,0)=H(x)$, where $H_{u}(x,t)$ denote the mean curvature of $F(\Sigma,t)$ with respect to the metric $g_{u}$.

On the other hand, since $F(\Sigma,T)$ is strictly convex in $\mathbb{H}^{n}_{-k^{2}}$, we can rewrite the hyperbolic metric $g_{hyp}$ on $\widetilde{\Omega}$ as $g'=d\rho^{2}+\widetilde{g}_{\rho}$ as in \cite{wangyau07}, where $\widetilde{\Sigma}_{\rho}$ denote the level set of the distance function $\rho$ from $\widetilde{\Sigma}_{0}=F(\Sigma,T)$, $\widetilde{g}_{\rho}$ denote the induced metric on $\widetilde{\Sigma}_{\rho}$, $\widetilde{R}^{\rho}$ the intrinsic scalar curvature of $\widetilde{\Sigma}_{\rho}$, $\widetilde{H}_{\rho}$ the mean curvature with respect to $g_{hyp}$. By solving the equation ((2.10) in \cite{wangyau07}):
\begin{align}\label{equa2}
  \left\{
     \begin{array}{ll}
       2\widetilde{H}_{\rho}\frac{\partial v}{\partial\rho}=2v^{2}\Delta v+(v-v^{3})(\widetilde{R}^{\rho}+n(n-1)k^{2}),&\rho\in[0,\infty)\\
        v(p,0)=\frac{\widetilde{H}_{\rho}(p,0)}{H_{u}(p,T)},
     \end{array}
   \right.
\end{align}
we obtain on $\widetilde{\Omega}$ a metric $g''=v^{2}d\rho^{2}+\widetilde{g}_{\rho}$, which is asymptotically hyperbolic (in the sense of \cite{ad98}), with constant scalar curvature $R(g'')\equiv-n(n-1)k^{2}$, and $\widetilde{H}_{v}(p,0)=H_{u}(p,T)$ on $\widetilde{\Sigma}_{0}$, where $\widetilde{H}_{v}(p,\rho)$ is the mean curvature of $\widetilde{\Sigma}_{\rho}$ with respect to the new metric $g''$.

Gluing $(\Omega,g), (\widehat{\Omega},g_{u}), (\widetilde{\Omega},g'')$ along their boundaries, we obtain a spin manifold $\overline{M}$ with Lipschitz asymptotically hyperbolic metric $\overline{g}$ such that $\overline{g}$ is smooth except at $F(\Sigma,0)$ and $F(\Sigma,T)$, the scalar curvature $R(\overline{g})\geq-n(n-1)k^{2}$, and $R(\overline{g})\equiv-n(n-1)k^{2}$ on the unbounded component $\widetilde{\Omega}$.

Now we can use the spinor method to prove the following positive mass type theorem:

\begin{thm}\label{pmm}
Suppose $\widetilde{\Sigma}_{\rho}$, $\widetilde{H}_{\rho}$, $\widetilde{H}_{v}$ are the same as above, and $\mathbf{X}$ denotes the position vector of $\mathbb{H}^{n}_{-k^{2}}$ in $\mathbb{R}^{n,1}$, then for every future-directed null vector $\mathbf{\zeta}\in\mathbb{R}^{n,1}$,
\begin{align}\label{limitmass}
\underset{\rho\rightarrow\infty}{\lim}\int_{\widetilde{\Sigma}_{\rho}}(\widetilde{H}_{\rho}-\widetilde{H}_{v})\mathbf{X}\cdot\zeta d\widetilde{\sigma}_{\rho}\leq0.
\end{align}
\end{thm}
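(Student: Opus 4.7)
My plan is to prove Theorem \ref{pmm} by applying Kwong's general-dimension hyperbolic positive mass theorem \cite{Kwong12} to the glued manifold $(\overline{M},\overline{g})$, after a regularization step that handles the Lipschitz corners along $F(\Sigma,0)$ and $F(\Sigma,T)$. The two essential inputs are (i) a distributional scalar curvature bound $R(\overline{g}) \geq -n(n-1)k^{2}$ across the corners, and (ii) an identification of the asymptotic mass integral with the expression in (\ref{limitmass}).

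For (i), the critical observation is that, by construction, the mean curvatures agree across each corner hypersurface: $H_{u}(\cdot,0) = H$ at $F(\Sigma,0)$, and $H_{u}(\cdot,T) = \widetilde{H}_{v}(\cdot,0)$ at $F(\Sigma,T)$. This is exactly the matching condition under which a Miao-style mollification applies. I would smooth $\overline{g}$ in tubular neighborhoods of the two corners to obtain a family of smooth asymptotically hyperbolic metrics $\overline{g}_{\epsilon}$ with $R(\overline{g}_{\epsilon}) \geq -n(n-1)k^{2} - o_{\epsilon}(1)$ (in the sense of integration against non-negative test functions) and with the asymptotic end $\widetilde{\Omega}$ unchanged. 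For (ii), given a future-directed null $\zeta \in \mathbb{R}^{n,1}$, I would use an imaginary Killing spinor $\phi_{0}$ on $\mathbb{H}^{n}_{-k^{2}}$ satisfying $\nabla_{X}\phi_{0} = \tfrac{ik}{2}\, X\cdot\phi_{0}$ as the asymptotic data; the associated spinorial bilinear reproduces $\mathbf{X}\cdot\zeta$ on $\widetilde{\Sigma}_{\rho}$ in the limit $\rho \to \infty$.

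Then, for each $\overline{g}_{\epsilon}$, solving the Dirac equation $\mathcal{D}\phi = 0$ with $\phi$ asymptotic to $\phi_{0}$ and applying the hyperbolic Lichnerowicz-Weitzenb\"ock identity yields
\begin{align*}
0 \;\leq\; \int_{\overline{M}_{\epsilon}}\!\left(|\nabla\phi|^{2} + \tfrac{1}{4}\bigl(R(\overline{g}_{\epsilon}) + n(n-1)k^{2}\bigr)|\phi|^{2}\right)dV \;=\; -\lim_{\rho\to\infty}\int_{\widetilde{\Sigma}_{\rho}}(\widetilde{H}_{\rho}-\widetilde{H}_{v})\,\mathbf{X}\cdot\zeta\,d\widetilde{\sigma}_{\rho} + o_{\epsilon}(1).
\end{align*}
The identification of the boundary term at infinity uses the asymptotic expansion of $v$ from equation (\ref{equa2}) together with the fact that the level sets $\widetilde{\Sigma}_{\rho}$ become exponentially umbilical as $\rho\to\infty$ (by Theorem \ref{Ger} applied to the outward normal flow past $F(\Sigma,T)$), so that the standard asymptotic mass pairing reduces to the claimed integral. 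Sending $\epsilon \to 0$ then gives (\ref{limitmass}).

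The main technical obstacle will be executing the corner regularization cleanly: one must verify that (a) the matching mean curvatures truly yield the weak scalar curvature lower bound under mollification, (b) the Dirac solutions $\phi_{\epsilon}$ admit uniform weighted estimates so that the Witten integral is continuous in $\epsilon$, and (c) the boundary term at infinity is stable under both the regularization and the passage to the limit $\rho \to \infty$. This is the hyperbolic analogue of the corner argument in \cite{emw12}, and the mean-curvature matching built into the construction of $\overline{g}$ is precisely what makes it go through.
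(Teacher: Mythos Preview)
Your proposal is correct in outline but takes a different route from the paper's own proof. The paper does \emph{not} regularize the corners; it works directly with the Lipschitz metric $\overline{g}$, solving $\widehat{D}\bar{\phi}=0$ via Lax--Milgram and invoking the regularity argument of Shi--Tam \cite{ShiTam02} (see also \cite{liuyau06}) to handle the two hypersurfaces where $\overline{g}$ is merely Lipschitz. Moreover, the paper obtains the boundary term at $\widetilde{\Sigma}_{\rho}$ as an \emph{exact} identity valid for every $\rho$, not just asymptotically: using the gauge transformation $A:(T\widetilde{\Omega},g_{hyp})\to(T\widetilde{\Omega},\overline{g})$ and the Killing spinor $\phi$ with $|\phi|^{2}_{g_{hyp}}=-2k\,\mathbf{X}\cdot\zeta$, one gets $\int_{\widetilde{\Sigma}_{\rho}}\langle\phi'',(\widehat{\nabla}_{\nu_{\rho}}+\overline{c}(\nu_{\rho})\widehat{D})\phi''\rangle\,d\widetilde{\sigma}_{\rho}=\tfrac12\int_{\widetilde{\Sigma}_{\rho}}(\widetilde{H}_{\rho}-\widetilde{H}_{v})|\phi''|^{2}\,d\widetilde{\sigma}_{\rho}$ directly. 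By contrast, you propose a Miao-style mollification $\overline{g}_{\epsilon}$ (legitimate here precisely because the mean curvatures match across both corners by construction), apply the smooth hyperbolic Witten argument, and pass to the limit $\epsilon\to 0$; you also identify the mass integral only asymptotically via the expansion of $v$. Your approach is more modular---it reduces to the smooth case---at the cost of the extra limiting arguments you flag in (a)--(c); the paper's approach is more direct but requires checking that the Shi--Tam regularity theory for Dirac solutions across Lipschitz hypersurfaces carries over. One minor correction: your appeal to Theorem~\ref{Ger} for the umbilicity of $\widetilde{\Sigma}_{\rho}$ is misplaced, since that theorem concerns the inverse curvature flow while the exterior foliation is by unit-speed normal flow from a strictly convex hypersurface; the umbilicity still holds, but for a different (elementary) reason.
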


\begin{proof}[Proof of Theorem \ref{pmm}.]
A similar positive mass type theorem when $\Sigma$ can be isometrically embedded into $\mathbb{H}^{n}_{-k^{2}}$ convexly was proved in \cite{wangyau07} for $n=3$, and was proved in \cite{Kwong12} for general dimensions. The only difference in our case is that the metric $\overline{g}$ on the manifold $\overline{M}$ has two corners, near which the metric is only Lipschitz.
Since we are only interested in the asymptotic behavior, the argument in \cite{wangyau07} and \cite{Kwong12} can be carried through with minor modifications. We just describe the general idea of the proof in the following.

A spinor $\psi\in{S}(\mathbb{H}^{n}_{-k^{2}},g_{hyp})$ is said to be a Killing spinor with respect to $\nabla$ if $${\nabla}_{V}\psi+\frac{\sqrt{-1}}{2}kc(V)\psi=0$$
for every $V$, where $\nabla$ and $c$ denote the Riemannian spin connection and the Clifford multiplication with respect to $g_{hyp}$, respectively.

The first important ingredient is that for any future-directed null vector $\mathbf{\zeta}\in\mathbb{R}^{n,1}$, there is a Killing spinor $\phi$ on $(\mathbb{H}^{n}_{-k^{2}},g_{hyp})$ such that
$$|\phi|^{2}_{g_{hyp}}=-2k\mathbf{X}\cdot\zeta.$$
It was proved in Propositions 2.1 and 2.2 of \cite{Kwong12}.

We define the Killing spin connection $\widehat{\nabla}$, Killing Dirac operator $\widehat{D}$ on $(\overline{M},\overline{g})$ by
$$\widehat{\nabla}_{V}\psi={\overline{\nabla}}_{V}\psi+\frac{\sqrt{-1}}{2}k\overline{c}(V)\psi,$$
$$\widehat{D}\psi=\sum_{i=1}^{n}{\overline{c}}(e_{i})\widehat{\nabla}_{e_{i}}\psi,$$
where $\{e_{i}\}_{1\leq{i}\leq{n}}$ is a local orthonormal frame with respect to $\overline{g}$, $\overline{\nabla}$ and $\overline{c}$ denote the Riemannian spin connection and the Clifford multiplication associated to $\overline{g}$, respectively. By the Lichnerowicz type formula (see e.g. \cite{ad98}), we have the following:
\begin{align}
\int_{\overline{M}_{\rho}}\biggl(|\widehat{\nabla}\psi|_{\overline{g}}^{2}+\frac{1}{4}(R(\overline{g})+n(n-1))|\psi|_{\overline{g}}^{2}-|\widehat{D}\psi|_{\overline{g}}^{2}\biggr){d}V
=\int_{\widetilde{\Sigma}_{\rho}}\langle\psi,(\widehat{\nabla}_{\nu_{\rho}}+\overline{c}(\nu_{\rho})\widehat{D})\psi\rangle{d}\widetilde{\sigma}_{\rho},\nonumber
\end{align}
where $\nu_{\rho}$ denotes the outward pointed unit normal vector of $\widetilde{\Sigma}_{\rho}$ in $(\overline{M},\overline{g})$, $\overline{M}_{\rho}$ denotes the bounded domain of $\overline{M}\setminus\widetilde{\Sigma}_{\rho}$.

The second part is to calculate the boundary term of the Lichnerowicz formula. On $\widetilde{\Omega}$, the metrics $g_{hyp}$ and $\overline{g}$ (coincides with $g''$) induce the same metric on the hypersurfaces $\widetilde{\Sigma}_{\rho}$, and the induced metric at the normal direction is related by the function $v$ defined in (\ref{equa2}). Let $A:(T\widetilde{\Omega},g_{hyp})\rightarrow(T\widetilde{\Omega},\overline{g})$ be the Gauge transformation defined by
$A\frac{\partial}{\partial{\rho}}=\frac{1}{v}\frac{\partial}{\partial{\rho}}$ and $AV=V$ for any vector $V$ tangential to $\widetilde{\Sigma}_{\rho}$.
$A$ can be lifted to a map between the associated spinor bundles, $A:S(\widetilde{\Omega},g_{hyp})\rightarrow{S}(\widetilde{\Omega},\overline{g})$, which is an isometry and satisfies
$A(c(V)\psi)=\overline{c}(A(V))A(\psi)$ for $\psi\in{S}(\widetilde{\Omega},g_{hyp})$ and $V\in(T\widetilde{\Omega},g_{hyp})$. We denote $\phi''=A\phi$ (on $\widetilde{\Omega}$), then
\begin{align}\label{e1}
|\phi''|_{\overline{g}}^{2}=|\phi|_{g_{hyp}}^{2}=-2k\mathbf{X}\cdot\zeta.
\end{align}
Since we are only interested in the asymptotic behavior, we can extend $\phi''$ smoothly on the whole $\overline{M}$ by multiplying a cut-off function and we still denote it by $\phi''$.

By considering the hypersurface Dirac operator, one can prove the following (see Propositions 2.3 and 2.4 of \cite{Kwong12}):
\begin{align}\label{e2}
\int_{\widetilde{\Sigma}_{\rho}}\langle\phi'',(\widehat{\nabla}_{\nu_{\rho}}+\overline{c}(\nu_{\rho})\widehat{D})\phi''\rangle{d}\widetilde{\sigma}_{\rho}
=\frac{1}{2}\int_{\widetilde{\Sigma}_{\rho}}(\widetilde{H}_{\rho}-\widetilde{H}_{v})|\phi''|_{\overline{g}}^{2}{d}\widetilde{\sigma}_{\rho}.
\end{align}

The third important ingredient is to prove that there exists a spinor $\bar{\phi}$ on $(\overline{M},\overline{g})$ with $\widehat{D}\bar{\phi}=0$ and the asymptotic behavior
\begin{align}\label{e3}
\underset{\rho\rightarrow\infty}{\lim}\int_{\widetilde{\Sigma}_{\rho}}\langle\phi'',(\widehat{\nabla}_{\nu_{\rho}}+\overline{c}(\nu_{\rho})\widehat{D})\phi''\rangle{d}\widetilde{\sigma}_{\rho}
=\underset{\rho\rightarrow\infty}{\lim}\int_{\widetilde{\Sigma}_{\rho}}\langle\bar{\phi},(\widehat{\nabla}_{\nu_{\rho}}+\overline{c}(\nu_{\rho})\widehat{D})\bar{\phi}\rangle{d}\widetilde{\sigma}_{\rho}.
\end{align}
The existence of $\bar{\phi}$ is proved mainly by Lax-Milgram Theorem and a regularity argument. When applying Lax-Milgram Theorem, only the asymptotic behavior of $\overline{g}$, the boundedness of $R(\overline{g})$, and the condition $R(\overline{g})\geq-n(n-1)k^{2}$ are used. Because $\overline{g}$ is smooth except at the two corners and is Lipschitz near these two corners, the regularity argument in \cite{ShiTam02} (see also \cite{liuyau06}) can go through.

Finally, by the Lichnerowicz formula, since $\widehat{D}\bar{\phi}=0$,
\begin{align}\label{e4}
\int_{\widetilde{\Sigma}_{\rho}}\langle\bar{\phi},(\widehat{\nabla}_{\nu_{\rho}}+\overline{c}(\nu_{\rho})\widehat{D})\bar{\phi}\rangle{d}\widetilde{\sigma}_{\rho}
=\int_{\overline{M}_{\rho}}\biggl(|\widehat{\nabla}\bar{\phi}|_{\overline{g}}^{2}+\frac{1}{4}(R(\overline{g})+n(n-1))|\bar{\phi}|_{\overline{g}}^{2}\biggr){d}V\geq0.
\end{align}

Combining (\ref{e1}), (\ref{e2}), (\ref{e3}), (\ref{e4}), we obtain (\ref{limitmass}).
\end{proof}

Now we solve the equation ((5.3) in \cite{wangyau07})
\[
  \left\{
     \begin{array}{ll}
       \frac{\widetilde{H}_{\rho}}{v}\frac{\partial \mathbf{\widetilde{W}}}{\partial\rho}=-\Delta \mathbf{\widetilde{W}}+(n-1)k^{2}\mathbf{\widetilde{W}},&\rho\in[0,\infty)\\
        \underset{\rho\rightarrow\infty}{\lim}e^{-k\rho}\mathbf{\widetilde{W}}(p,\rho)=-\underset{\rho\rightarrow\infty}{\lim}e^{-k\rho}k\mathbf{X}(p,\rho),
     \end{array}
   \right.
\]
and solve the equation (\ref{equaW}) with $\mathbf{W}(\cdot,T)=\mathbf{\widetilde{W}}(\cdot,0)$, and finally obtain $\mathbf{W^{0}}(x)=-\mathbf{W}(x,0)$. Since $\mathbf{X}$ is future-directed time-like, by the maximum principle of (backward) parabolic equation, it is easy to prove that both $\mathbf{\widetilde{W}}$ and $\mathbf{W}$ are past-directed time-like, hence $\mathbf{W^{0}}$ is future-directed time-like. By the monotonicity formula in \cite{wangyau07} (see Proposition 5.3 of \cite{wangyau07}) and our monotonicity formula in Proposition \ref{monoto}, together with (\ref{limitmass}), for any future-directed null vector $\mathbf{\zeta}\in\mathbb{R}^{n,1}$,
\begin{align}
&\int_{\Sigma}(H_{0}-H)(-\mathbf{W^{0}})\cdot\mathbf{\zeta}d\sigma=\int_{\Sigma\times\{0\}}(H_{\eta}(0)-H_{u}(0))\mathbf{W}\cdot\mathbf{\zeta}d\sigma_{t}\nonumber\\
\geq&\int_{\Sigma\times\{T\}}(H_{\eta}(T)-H_{u}(T))\mathbf{W}\cdot\mathbf{\zeta}d\sigma_{t}=\int_{\widetilde{\Sigma}_{0}}(\widetilde{H}_{0}-\widetilde{H}_{v}(0))\mathbf{\widetilde{W}}\cdot\mathbf{\zeta}d\widetilde{\sigma}_{\rho}\nonumber\\
\geq&\underset{\rho\rightarrow\infty}{\lim}\int_{\widetilde{\Sigma}_{\rho}}(\widetilde{H}_{\rho}-\widetilde{H}_{v}(\rho))\mathbf{\widetilde{W}}\cdot\mathbf{\zeta}d\widetilde{\sigma}_{\rho}\geq0.\nonumber
\end{align}
Therefore,
\[\int_{\Sigma}(H_{0}-H)\mathbf{W^{0}}d\sigma\]
is a future-directed non-space-like vector.
\end{proof}

\noindent\textbf{Acknowledgments}.The author would like to express his gratitude to his advisor Professor Bing-Long Chen for his constant encouragement and careful guidance. The author is grateful to the referees for careful reading and for critical comments to improve this paper.

\end{document}